\definecolor{lightgray}{rgb}{0.8, 0.8, 0.8}
\definecolor{darkgray}{rgb}{0.6, 0.6, 0.6}
\newcounter{todocounter}
\newcommand{\minisec}[1]{\noindent{\sc #1.}}
\theoremstyle{plain}
\newtheorem{theorem}{Theorem}
\newtheorem{proposition}[theorem]{Proposition}
\newfont{\footsc}{cmcsc10 at 8truept}
\newfont{\footbf}{cmbx10 at 8truept}
\newfont{\footrm}{cmr10 at 10truept}
\renewenvironment{abstract}{
	\begin{list}{}%
	{\setlength{\rightmargin}{1in}%
	\setlength{\leftmargin}{1in}}%
	\item[]\ignorespaces\begin{small}}%
	{\end{small}\unskip\end{list}%
}
\title{\sc How Many Pop-Stacks Does it Take to Sort a Permutation?}
\author{\centering
\begin{tabular}{ccc}
Michael Albert
&\rule{0pt}{0pt}&
Vincent Vatter%
\footnote{Vatter's research was partially supported by the Simons Foundation via award number 636113.}\\[-0.25ex]
\small Department of Computer Science
&&
\small Department of Mathematics\\[-0.5ex]
\small University of Otago
&&
\small University of Florida\\[-0.5ex]
\small Dunedin, New Zealand
&&
\small Gainesville, Florida USA\\[-1.5ex]
\end{tabular}
}
\titleformat{\section}{\large\sc}{\thesection.}{1em}{}
\date{}
\begin{document}
\maketitle

\pagestyle{main}

\begin{abstract}
{\sc Abstract.}
Pop-stacks are variants of stacks that were introduced by Avis and Newborn in 1981. Coincidentally, a 1982 result of Unger implies that every permutation of length $n$ can be sorted by $n-1$ passes through a deterministic pop-stack. We give a new proof of this result inspired by Knuth's zero-one principle.
\end{abstract}

\section{Introduction}

We are concerned here with the sorting of permutations. We take a \emph{permutation of length $n$} to be a word over the alphabet $\{1,2,\dots,n\}$ in which every symbol occurs precisely once, and \emph{sorting} the permutation $\pi=\pi(1)\cdots\pi(n)$ of length $n$ means transforming it into the \emph{identity} $12\cdots n$.

The mathematical study of stack sorting was initiated by Knuth in the first volume of \emph{The Art of Computer Programming}~\cite[Section~2.2.1]{knuth:the-art-of-comp:1}, where he considered stacks, queues, and deques. In particular, a \emph{stack} is a first-in last-out (filo) sorting device with the operations \emph{push} (add the next entry of the input to the top of the stack) and \emph{pop} (remove the top entry of the stack and place it at the end of the output). The stack sorting of a permutation is depicted in Figure~\ref{fig-stack-sorting}. 

\newcommand{\stackdraw}{%
	\draw [very thick, line cap=round, darkgray]
		(6,5)--(1,5)--(1,0)--(-1,0)--(-1,5)--(-6,5);
	\node at (-3.5,5) [below] {output};
	\node at (3.5,5) [below] {input};
}
\newcommand{\stackfill}[5]{%
	\node at (-6,6) [anchor=west] {$#1$};
	\node at (0,-0.1) [anchor=south] {$#2$};
	\node at (0,1.1) [anchor=south] {$#3$};
	\node at (0,2.3) [anchor=south] {$#4$};
	\node at (6,6) [anchor=east] {$#5$};
}
\newcommand{\stackpush}{%
	\draw [rounded corners=5, ->] (2,6)--({1/3},6)--({1/3},4);
}
\newcommand{\stackpop}{%
	\draw [rounded corners=5, ->] ({-1/3},4)--({-1/3},6)--(-2,6);
}

\begin{figure}[t]
\begin{center}
\begin{footnotesize}
\begin{tabular}{c@{\hskip 0.5in}c@{\hskip 0.5in}c@{\hskip 0.5in}c}
\begin{tikzpicture}[scale=0.2, baseline=(current bounding box.center)]
	\stackdraw
	\stackfill{}{3}{}{}{124}
\end{tikzpicture}
&
\begin{tikzpicture}[scale=0.2, baseline=(current bounding box.center)]
	\stackdraw
	\stackfill{}{3}{1}{}{24}
	\stackpop
\end{tikzpicture}
&
\begin{tikzpicture}[scale=0.2, baseline=(current bounding box.center)]
	\stackdraw
	\stackfill{1}{3}{}{}{24}
	\stackpush
\end{tikzpicture}
&
\begin{tikzpicture}[scale=0.2, baseline=(current bounding box.center)]
	\stackdraw
	\stackfill{1}{3}{2}{}{4}
	\stackpop
\end{tikzpicture}
\end{tabular}
\end{footnotesize}
\end{center}
\caption{In the process of sorting the permutation $3124$ with a stack, we initially push the $3$ onto the stack, then perform a push, pop, and push as shown above, and then finish the sorting by popping the $2$ and the $3$ and then pushing and popping the $4$.}
\label{fig-stack-sorting}
\end{figure}
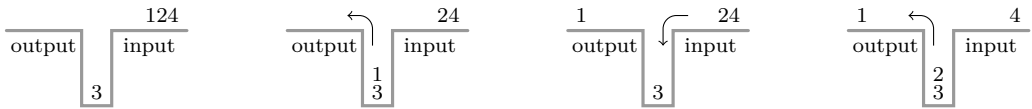

In the case of sorting with a single stack, there is a \emph{canonical stack sorting algorithm}. This algorithm can sort every permutation that can be sorted with a single stack, and is governed by the following two deterministic rules.
\begin{enumerate}
	\item[($1$)] If the next entry in the input is less than the topmost entry of the stack (or the stack is empty), push this entry onto the top of the stack.
	\item[($2$)] Otherwise, pop the topmost entry of the stack.
\end{enumerate}
In fact, this algorithm can be applied to any word of distinct symbols to produce another word on the same set of symbols. We denote this operation by $S$, so $S(\pi)$ is the output obtained after applying the canonical stack sorting algorithm to the permutation $\pi$. It follows readily that if $\pi=\lambda n \rho$, where where $n$ is the largest symbol of $\pi$, $\lambda$ is the word consisting  of the entries of $\pi$ to the left of $n$, and $\rho$ is the word consisting of the entries to the right of $n$, then
\begin{equation}\tag{\textsection}\label{eqn-stack-operator}
	S(\pi)
	=
	S(\lambda n \rho)
	=
	S(\lambda) S(\rho) n.
\end{equation}	
For example,
\[
	S(463152)
	=
	S(4)\, S(3152)\, 6
	=
	4\, S(31)\, S(2)\, 56
	=
	413256.
\]

From our comments above, the permutation $\pi$ is \emph{stack-sortable} if and only if $S(\pi)$ is the identity. Knuth showed that these are precisely the permutations that avoid the pattern $231$. If $S^t(\pi)$ is the identity, then we say that $\pi$ can be \emph{sorted by $t$ passes through a stack} or that it is \emph{West-$t$-stack-sortable}. This name is a reference to West~\cite{west:sorting-twice-t:}, who first studied these permutations and conjectured that the number of West-$2$-stack-sortable permutations of length $n$ has the formula $2(3n)!/((n+1)!(2n+1)!)$, a conjecture first verified by Zeilberger~\cite{zeilberger:a-proof-of-juli:}.

We are concerned here with a restricted version of a stack that was first introduced by Avis and Newborn~\cite{avis:on-pop-stacks-i:} in 1981. A \emph{pop-stack} is identical to a stack except that when a pop is performed, the entire contents of the pop-stack must be popped. For example, one can check that the permutation $312$ is stack-sortable but not pop-stack-sortable. A simple adaptation of the algorithm for stack sorting yields the \emph{canonical pop-stack sorting algorithm}, which is governed by the following two deterministic rules.
\begin{enumerate}
	\item[($1$)] If the next entry in the input is less than the topmost entry of the pop-stack (or the pop-stack is empty), push this entry onto the top of the pop-stack.
	\item[($2'$)] Otherwise, pop all of the entries of the pop-stack.
\end{enumerate}
We denote the operation of performing this algorithm on the permutation $\pi$ by $P(\pi)$. It can be checked that this canonical pop-stack sorting algorithm can sort every permutation that can be sorted by a pop-stack, so the permutation $\pi$ is pop-stack-sortable if and only if $P(\pi)$ is the identity. It can also be seen that the pop-stack-sortable permutations are precisely those avoiding the patterns $231$ and $312$ (the \emph{layered} permutations).

Analogous to \eqref{eqn-stack-operator}, if the entries of $\pi$ can be partitioned into $\ell$ maximal decreasing runs (contiguous subsequences whose entries are decreasing) $\pi_1\pi_2\cdots\pi_\ell$, then
\begin{equation}\tag{\textparagraph}\label{eqn-pop-stack-operator}
	P(\pi)=\pi_1^{\textrm{r}}\pi_2^{\textrm{r}}\cdots\pi_\ell^{\textrm{r}},
\end{equation}	
where $\pi_i^{\textrm{r}}$ is the result of reversing the word $\pi_i$. For example,
\[
	P(463152)
	=
	(4)^{\textrm{r}}(631)^{\textrm{r}}(52)^{\textrm{r}}
	=
	413625.
\]

We are interested in how many passes through a pop-stack (using the canonical pop-stack sorting algorithm) are necessary in order to ensure that every permutation of length $n$ can be sorted. It is obvious from \eqref{eqn-stack-operator} that the analogous answer for a stack is $n-1$, because the symbol $n$ is in its correct position (last) in $S(\pi)$, both symbols $n-1$ and $n$ are in their correct positions in $S^2(\pi)$, and so on. It is not a priori obvious that the same bound suffices for sorting with pop-stacks, but this is indeed the case.

\begin{theorem}
\label{thm-n-1-pop-stacks-suffice}
For every permutation $\pi$ of length $n$, $P^{n-1}(\pi)$ is equal to the identity.
\end{theorem}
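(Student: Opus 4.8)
The plan is to follow the spirit of Knuth's zero--one principle by tracking, for each threshold $t\in\{1,\dots,n-1\}$, the two-letter word $\tau_t(\pi)$ obtained from $\pi$ by recording a \emph{heavy} symbol $1$ in each position holding a value greater than $t$ and a \emph{light} symbol $0$ in every other position. First I would observe that $\pi$ is the identity if and only if $\tau_t(\pi)=0^t1^{n-t}$ for every such $t$, since this condition merely asserts that the $t$ smallest entries occupy the first $t$ positions for each $t$. Because $P$ acts by reversing maximal decreasing runs, as in \eqref{eqn-pop-stack-operator}, and because the frontier between a block of light values and a block of heavy values is always an ascent---so that no decreasing run straddles it---a word of the form $0^t1^{n-t}$ is fixed by every subsequent pass. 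Consequently it suffices to prove that, for each fixed $t$, at most $n-1$ passes are needed before $\tau_t(\pi)$ first becomes sorted.

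The engine of the argument is a monotonicity property of the light and heavy symbols. Within any maximal decreasing run the heavy entries precede the light ones, so reversing the run moves every light entry weakly to the left and every heavy entry weakly to the right, while \eqref{eqn-pop-stack-operator} leaves the left-to-right order of entries lying in distinct runs unchanged. I would deduce two things. First, if a light entry already lies to the left of a heavy entry then the two must lie in different runs (a decreasing run always places the heavier entry first), so a pass preserves their order; thus once a light--heavy pair is correctly ordered it stays so, and the number of heavy-before-light inversions never increases. Second, $\tau_t(\pi)$ fails to be sorted exactly when some maximal decreasing run of $\pi$ contains both a heavy and a light entry, for otherwise every run is monochromatic and the impossibility of a descent from a heavy value to a light value across a run boundary forces the light runs to precede the heavy runs. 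Hence every pass that has not yet produced a sorted word strictly reduces the number of inversions.

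These observations already bound the number of passes by the initial inversion count, but that is far weaker than $n-1$, and closing this gap is where I expect the real difficulty to lie. The clean cases are the extreme values: the entry $n$ is the head of its run and, if it is not already last, is followed by a smaller entry, so its run has length at least two and $n$ advances strictly rightward on each pass, reaching position $n$ within $n-1$ passes; symmetrically, the entry $1$ advances strictly leftward to position $1$ within $n-1$ passes. The obstacle is that an intermediate entry may \emph{stall}, occupying a singleton run---a value with a smaller neighbour on its left and a larger one on its right---and hence not moving at all during a pass. To obtain the sharp bound I would track the trajectory of each entry toward its target position and argue that stalls cannot accumulate: each stall of a given entry should be charged to a strictly larger (for a heavy entry) or strictly smaller (for a light entry) entry that does make progress on that pass, with the monotonicity above ensuring that no entry is charged so often that the total exceeds the budget of $n-1$.

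Finally, taking the maximum over all thresholds $t$ recovers the statement for $\pi$ itself, since $\pi$ is sorted precisely when every $\tau_t(\pi)$ is. The crux throughout is the passage from the easy monovariant (inversions decrease) to the tight count, and I expect the stall-charging step to require the most care.
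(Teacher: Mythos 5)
Your reduction to binary threshold words is exactly the paper's first move (your $\tau_t(\pi)$ is the paper's $\pi\vert_{t+1}$, and your observation that $\pi$ is the identity if and only if every $\tau_t(\pi)$ is sorted is also how the paper concludes), and your monotonicity observations about runs are correct. But the proof has a genuine gap precisely where you say you expect the difficulty to lie: the passage from the inversion monovariant, which gives only a bound quadratic in $n$, to the sharp bound $n-1$ is never carried out. The ``stall-charging'' step is a plan, not an argument, and it faces a real obstruction. An entry can stall for as many as $n-2$ consecutive passes (in $23\cdots n1$ the entry $2$ sits in a singleton run until the final pass), so a charging scheme cannot simply amortize stalls against moves pass by pass; moreover, the operation induced on $\tau_t(\pi)$ by a pass of $P$ is not determined by the binary word alone---it depends on the underlying permutation---so any such scheme would have to work uniformly over all the ways a pass can act on the binary word. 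The proposal gives no indication of how to do either of these things.

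The paper closes this gap with a domination argument rather than a charging argument, and this is the idea your outline is missing. The nondeterminism just mentioned is formalized as the ``tumble'' operator (Proposition~\ref{prop-tumbles}); then one introduces the deterministic ``flip'' operator $F$, which reverses every $10$ factor, and notes that flip is the \emph{slowest} of all tumbles: under any tumble, every zero moves at least as far left as it does under flip. The sharp bound then comes from comparison in the partial order $\trianglelefteq$ on zero positions: $F$ is monotone for this order (Proposition~\ref{prop-flips}), every word $w$ with $a$ zeros and $b$ ones satisfies $w\trianglelefteq 1^b0^a$, and the extremal word $1^b0^a$ takes exactly $a+b-1=n-1$ flips to sort (Proposition~\ref{prop-flips-number}). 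So rather than tracking individual stalls, one bounds every trajectory at once by the single worst trajectory, namely flip acting on $1^b0^a$. Some such global argument is needed: the bound $n-1$ is attained, and local accounting of inversions or stalls does not obviously reach it.
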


The bound in Theorem~\ref{thm-n-1-pop-stacks-suffice} can be seen to be best possible by considering the permutation $23\cdots n1$. Indeed, there are a multitude of permutations requiring the maximum number of passes through a pop-stack to be sorted. These permutations do not appear to have been counted yet.

In a paper published only a year after Avis and Newborn's introduction of pop-stacks, but with completely different motivations, Ungar~\cite{ungar:2n-noncollinear:} proved a result that implies Theorem~\ref{thm-n-1-pop-stacks-suffice}, via \eqref{eqn-pop-stack-operator}. Specifically, he proved that no matter what permutation of length $n$ one starts with, one can reach the permutation $n(n-1)\cdots 21$ in at most $n-1$ ``moves'', where a move consists of reversing every maximal increasing run of the permutation. Our motivation for this note is to prove Theorem~\ref{thm-n-1-pop-stacks-suffice} via a series of relaxations from pop-stack-sorting to a nondeterministic sort we call tumble, and then to a deterministic sort we call flip. Fundamentally, our proof and that presented by Ungar are quite similar. However, we believe this change of perspective makes it easier to appreciate the mechanics of the proof, and that the intermediate steps may be of further use.

\section{From Pop-Stacks to Tumbles}


Our proof of Theorem~\ref{thm-n-1-pop-stacks-suffice} is inspired by Knuth's zero-one principle~\cite[Theorem~Z of Section~5.3.4]{knuth:the-art-of-comp:3}, which states that if a certain type of sorting network is able to sort all binary words, then it is able to sort all words over any ordered alphabet. While this result does not apply in our context, we do utilize a part of its proof. Given a permutation $\pi$ of length $n$ and an integer $0\le k\le n$, we define the word $\pi\vert_k$ by
\[
	\pi\vert_k(i)
	=
	\left\{
	\begin{array}{ll}
		0&\text{if $\pi(i) < k$, or}\\
		1&\text{if $\pi(i) \ge k$.}
	\end{array}
	\right.
\]
for all $1\le i\le n$. For example,
\[
	463152\vert_3
	=
	110010.
\]

We also introduce a nondeterministic sorting operation $T$ on binary words, which we call \emph{tumble}. Given a binary word $w$, the tumble operator performs the following two steps.
\begin{enumerate}
	\item[(1)] Choose a collection of non-overlapping factors (contiguous subsequences) of the form $1^+0^+$ (that is, a string of one or more ones followed by a string of one or more zeros) which together contain every $10$ factor in $w$.
	\item[(2)] Reverse each of those factors in place.
\end{enumerate}
Since tumble is nondeterministic, it produces multiple outputs from a single input. We denote by $T(w)$ the \emph{set} of all possible outputs of the tumble operator given the input $w$. For example, there are four possible outputs when the tumble operator is applied to the word $110010$, depending on the four different ways to choose the first factor of the form $1^+0^+$, and thus we have
\begin{eqnarray*}
	T(110010)
	&=&
	\{(1100)^{\textrm{r}}(10)^{\textrm{r}},
	  (110)^{\textrm{r}}0(10)^{\textrm{r}},
	  1(100)^{\textrm{r}}(10)^{\textrm{r}},
	  1(10)^{\textrm{r}}(10)^{\textrm{r}}\}\\
	&=&
	\{001101,
	  011001,
	  100101,
	  101001\}.
\end{eqnarray*}

Our result below establishes the connection between pop-stacks and tumbles. The result holds for the example above because
\[
	P(463152)\vert_3
	=
	413625\vert_3
	=
	100101
	\in
	T(110010)
	=
	T(463152\vert_3).
\]

\begin{proposition}
\label{prop-tumbles}
For every permutation $\pi$ of length $n$ and every integer $0\le k\le n$,
\[
	P(\pi)\vert_k \in T(\pi\vert_k).
\]
\end{proposition}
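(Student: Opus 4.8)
The plan is to push the projection $\vert_k$ through the maximal decreasing run decomposition of $\pi$ and then to exhibit an explicit choice of factors that realizes $P(\pi)\vert_k$ as a tumble of $\pi\vert_k$. First I would write $\pi=\pi_1\pi_2\cdots\pi_\ell$ as its maximal decreasing runs. Since each run $\pi_j$ is decreasing, once one of its entries drops below $k$ every subsequent entry of that run does as well; hence $(\pi_j)\vert_k$ has the form $1^{a_j}0^{b_j}$ for some $a_j,b_j\ge 0$. Concatenating these, we get $\pi\vert_k=1^{a_1}0^{b_1}\cdots 1^{a_\ell}0^{b_\ell}$, and because $\vert_k$ is applied entrywise it commutes with reversal, so by \eqref{eqn-pop-stack-operator}
\[
	P(\pi)\vert_k
	=
	(\pi_1^{\textrm{r}})\vert_k\cdots(\pi_\ell^{\textrm{r}})\vert_k
	=
	0^{b_1}1^{a_1}\cdots 0^{b_\ell}1^{a_\ell}.
\]

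Next I would propose the natural candidate tumble: reverse exactly those blocks $1^{a_j}0^{b_j}$ with $a_j\ge 1$ and $b_j\ge 1$. These factors are pairwise non-overlapping, being disjoint blocks, and each has the required shape $1^+0^+$. Reversing the block $1^{a_j}0^{b_j}$ produces $0^{b_j}1^{a_j}$, while a block with $a_j=0$ or $b_j=0$ is a single constant run and is fixed by reversal; since $0^{b_j}1^{a_j}$ coincides with the unreversed block in these degenerate cases, performing this tumble yields precisely the word $0^{b_1}1^{a_1}\cdots 0^{b_\ell}1^{a_\ell}=P(\pi)\vert_k$ computed above. Thus the only thing left to verify is that the tumble operator is actually permitted to make this choice, namely that the chosen factors together contain every $10$ factor of $\pi\vert_k$.

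This coverage condition is the crux of the argument. The key observation I would make is that a $10$ factor of $\pi\vert_k$ can never straddle the boundary between two consecutive runs. Indeed, the boundary between $\pi_j$ and $\pi_{j+1}$ sits at an ascent of $\pi$, so at such a position $i$ we have $\pi(i)<\pi(i+1)$; were $\pi\vert_k(i)\,\pi\vert_k(i+1)=10$ we would have $\pi(i)\ge k>\pi(i+1)$, forcing the descent $\pi(i)>\pi(i+1)$, a contradiction. Consequently every $10$ factor of $\pi\vert_k$ lies inside a single block $(\pi_j)\vert_k=1^{a_j}0^{b_j}$, and any block containing a $10$ necessarily has $a_j,b_j\ge 1$ and hence belongs to our chosen collection. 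This confirms that the candidate is a legitimate tumble and completes the proof. I expect the bookkeeping in the first two paragraphs to be routine; the substantive point is this last observation that run boundaries are ascents and therefore cannot introduce $10$ factors, which is exactly what guarantees that the block decomposition covers all of the descents the tumble operator is required to cover.
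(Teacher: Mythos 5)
Your proof is correct and takes essentially the same approach as the paper's: decompose $\pi$ into its maximal decreasing runs, observe that each run projects under $\vert_k$ to a block of the form $1^{a_j}0^{b_j}$, and then show the tumble operator is permitted to reverse exactly the blocks of the form $1^+0^+$. Your explicit verification that no $10$ factor straddles a run boundary (because run boundaries are ascents of $\pi$) is the same coverage fact the paper asserts, just spelled out in more detail.
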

\begin{proof}
Given any partition of the entries of $\pi$ into factors as $\pi=\pi_1\pi_2\cdots\pi_\ell$, we have
\[
	\pi\vert_k
	=
	(\pi_1\pi_2\cdots\pi_\ell)\vert_k
	=
	\left(\pi_1\vert_k\right) \left(\pi_2\vert_k\right) \cdots \left(\pi_\ell\vert_k\right).
\]
If we take the $\pi_i$ in the above expression to be the maximal decreasing runs of $\pi$, then every $10$ factor of $\pi\vert_k$ must occur within one of the factors $\pi_i\vert_k$, and each of the factors $\pi_i\vert_k$ must be of the form $1^+$, $1^+0^+$, or $0^+$, depending on how its first and last entries compare to $k$. Therefore the tumble operator is allowed to choose precisely the $\pi_i\vert_k$ of the form $1^+0^+$ as its collection of non-overlapping factors and reverse each of those factors in place. From here we need only note that
\[
	P(\pi)\vert_k
	=
	(\pi_1^{\textrm{r}}\pi_2^{\textrm{r}}\cdots\pi_\ell^{\textrm{r}})\vert_k
	=
	\left(\pi_1^{\textrm{r}}\vert_k\right) \left(\pi_2^{\textrm{r}}\vert_k\right) \cdots \left(\pi_\ell^{\textrm{r}}\vert_k\right)
	\in
	T(\pi\vert_k)
\]
to complete the proof.
\end{proof}

Proposition~\ref{prop-tumbles} shows that for all $k$ and $t$,
\[
	P^t(\pi)\vert_k
	=
	P(P^{t-1}(\pi))\vert_k
	\in
	T(P^{t-1}(\pi)\vert_k),
\]
and thus it follows by induction on $t$ that $P^t(\pi)\vert_k\in T^t(\pi\vert_k)$.

\section{From Tumbles to Flips}

Given a binary word $w$, the \emph{flip} sorting operator simply reverses every $10$ factor. Thus flip is deterministic, and we denote its output by $F(w)$. It should be obvious from the definitions of flip and tumble that for every binary word $w$, $F(w)\in T(w)$, or in other words, a flip is always a valid tumble.

We also consider a slightly nonstandard partial order on binary words. Given binary words $u$ and $v$, we write $u\trianglelefteq v$ if, for every $i$, the $i$th zero of $u$ (if it exists) occurs in the same position or before the $i$th zero of $v$. For example, in this order $001101\trianglelefteq 011001$ because the zeros of $001101$ occur in the indices $1$, $2$, and $5$ while the zeros of $011001$ occur in the indices $1$, $4$, and $5$. However, the words $u=011100$ and $v=101010$ are incomparable under this order because the first zero of $u$ occurs before the first zero of $v$, but the second zero of $u$ occurs after the second zero of $v$. We use this order only on words $u$ and $v$ having the same number of zeros, though strictly speaking the definition requires only that $v$ should have at least as many zeros as $u$.

\begin{proposition}
\label{prop-flips}
If $u\trianglelefteq v$, then $F(u)\trianglelefteq F(v)$.
\end{proposition}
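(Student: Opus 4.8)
The plan is to show that applying flip preserves the partial order $\trianglelefteq$ by tracking what happens to each individual zero. The key observation is that the flip operator has a clean description in terms of zero-positions: a single zero in a binary word either stays put or moves to the left, and it moves left by exactly the number of ones immediately preceding it in its maximal $1^+0^+$ block (equivalently, each zero in a $10$ factor hops leftward past the contiguous run of ones that precedes its block). So my first step would be to derive an explicit formula, or at least a monotonicity statement, for the position of the $i$th zero of $F(w)$ in terms of the positions of the zeros of $w$.

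\textbf{Reformulating flip via zero positions.} Let me denote by $z_i(w)$ the index of the $i$th zero of $w$. I would first argue that the action of $F$ on zeros is determined entirely by the ``gaps'' between consecutive zeros. Specifically, consider the block structure: between the $(i-1)$st zero and the $i$th zero there is some number of ones (possibly none), and reversing each maximal $1^+0^+$ factor sends the leading run of ones to the right of its trailing zeros. The upshot should be an expression for $z_i(F(w))$ as a function of the quantities $z_1(w), \dots, z_i(w)$ alone — crucially, it should depend only on the positions of the first $i$ zeros, not on anything to the right. This ``no dependence on the right'' property is what will make the induction on $i$ go through.

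\textbf{The inductive comparison.} Given $u \trianglelefteq v$, meaning $z_i(u) \le z_i(v)$ for every $i$, I would prove $z_i(F(u)) \le z_i(F(v))$ by induction on $i$. The base case handles the first zero, and the inductive step uses the formula from the previous paragraph together with the hypotheses $z_{i-1}(u) \le z_{i-1}(v)$ and $z_i(u) \le z_i(v)$. The main obstacle, which I expect to be the genuinely delicate point, is that the leftward displacement of a zero under flip is \emph{not} monotone in a naive way: a zero that starts further right in $v$ might sit in a block with more preceding ones and therefore jump further left, potentially overtaking the corresponding zero of $u$. So I cannot simply say ``each zero moves left, and zeros that start earlier stay earlier.'' Instead I must show that the combined effect of starting-position and displacement respects the order, which is precisely where the gap-based formula earns its keep.

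\textbf{Handling the displacement carefully.} To overcome that obstacle I would express the displacement of the $i$th zero in terms of consecutive gaps and show that the resulting map on $(z_{i-1}, z_i)$-data is monotone in the right sense. Concretely, the new position of the $i$th zero should turn out to be something like $\min$ or a piecewise-linear combination of the old positions of the $i$th zero and the nearest zero to its left, reflecting whether that zero is preceded by a run of ones at all. Once I have such a closed form, verifying $z_i(F(u)) \le z_i(F(v))$ reduces to checking that this combination is coordinatewise nondecreasing, which follows from the induction hypothesis and the assumption $u \trianglelefteq v$. I would then conclude that $F(u) \trianglelefteq F(v)$, completing the proof; a couple of small examples (such as comparing the flip of $011100$-type and $101010$-type words) would be worth checking mentally to make sure the displacement formula is correct at the boundaries.
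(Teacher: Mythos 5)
Your plan is built on a mischaracterization of the flip operator, and this is fatal rather than cosmetic. In the paper, $F$ reverses every occurrence of the two-letter factor $10$ — that is, it swaps each adjacent pair $10$ into $01$ — so under one application of flip every zero moves \emph{at most one position} to the left (and it moves exactly one position iff it is immediately preceded by a one). Your ``key observation'' instead says that each zero moves left ``by exactly the number of ones immediately preceding it in its maximal $1^+0^+$ block,'' i.e., you are describing the operator that reverses every \emph{maximal} $1^+0^+$ factor. That is the maximal tumble (essentially the pop-stack/Ungar move), not $F$. You can see the two operators are different already on $1^b0^a$: your operator sorts it in one step, whereas Proposition~\ref{prop-flips-number} asserts it takes $a+b-1$ flips. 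Worse, the proposition you are trying to prove is \emph{false} for the operator you describe: take $u=110100$ and $v=111000$, so $u\trianglelefteq v$ (zeros of $u$ at positions $3,5,6$; zeros of $v$ at $4,5,6$). Reversing maximal $1^+0^+$ blocks sends $u\mapsto 011001$ (zeros at $1,4,5$) and $v\mapsto 000111$ (zeros at $1,2,3$), and $011001\not\trianglelefteq 000111$ since $4>2$. So no amount of care with a ``gap-based formula'' can rescue the argument as framed: the monotonicity of $F$ is a consequence precisely of the one-step-left property that your reformulation discards. (This is also the whole point of the paper's relaxation chain: the pop-stack-like maximal reversal does \emph{not} preserve $\trianglelefteq$, which is why one compares it to the gentler flip via tumbles.)

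For contrast, the paper's proof is a short contradiction argument that leans entirely on the one-step property: if $F(u)\not\trianglelefteq F(v)$, there must be an index $i$ where the $i$th zeros of $u$ and $v$ occupy the same position $p$, yet $v$'s moves left and $u$'s does not; then $v$'s $i$th zero is preceded by a one while $u$'s is preceded by a zero, forcing $z_{i-1}(u)=p-1$ but $z_{i-1}(v)\le p-2$, contradicting $u\trianglelefteq v$. Your broader strategy — tracking zero positions $z_i$ and proving $z_i(F(u))\le z_i(F(v))$ — is compatible with this and could be carried out correctly, but only after you replace your displacement rule with the correct one: $z_i(F(w))=z_i(w)-1$ if position $z_i(w)-1$ of $w$ holds a one, and $z_i(F(w))=z_i(w)$ otherwise. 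As written, the proposal also never actually produces the formula it needs (``something like $\min$ or a piecewise-linear combination'' is left unresolved), but the misidentification of $F$ is the decisive gap.
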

\begin{proof}
Suppose, to the contrary, that $u\trianglelefteq v$ but that $F(u)\not\trianglelefteq F(v)$. As the flip operator moves each zero at most one position to the left, this means that there must be some index $i$ such that the $i$th zeros of $u$ and $v$ occur in the same position, but the flip operator moves the $i$th zero of $v$ one position to the left while it does not move the $i$th zero of $u$. For that to be the case, it must be that the $i$th zero of $v$ is immediately preceded by a one, while the $i$th zero of $u$ is not (and thus is immediately preceded by a zero). However, this implies that the $i-1$st zero of $v$ occurs before the $i-1$st zero of $u$, and that contradicts our hypothesis that $u\trianglelefteq v$.
\end{proof}

We noted above that flip is a valid tumble, but it is actually the worst tumble in the sense of this partial order. Indeed, in every application of tumble, every zero is moved at least as far to the left as it is moved by flip (identifying zeros by their position in left to right order). Thus we have $u\trianglelefteq F(w)$ for every $u\in T(w)$, and we indicate this by writing $T(w)\trianglelefteq F(w)$. It follows by induction that $T^t(w)\trianglelefteq F^t(w)$ for all positive integers $t$.

Since pop-stacks perform a sort of tumble (modulo the compression to binary words), and flips are the worst tumbles, we proceed to consider how long it takes the flip operator to sort a binary word. Given any binary word $w$ with $a$ zeros and $b$ ones, we have $w\trianglelefteq 1^b0^a$. Proposition~\ref{prop-flips} and induction therefore imply that $w$ requires at most as many flips to sort (into $0^a1^b$) as $1^b0^a$ requires, and this number is given by the following result.

\begin{proposition}
\label{prop-flips-number}
The word $1^b0^a$ requires precisely $a+b-1$ flips to sort.
\end{proposition}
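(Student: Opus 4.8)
The plan is to determine the word $F^t(1^b0^a)$ explicitly for every $t$ and simply read off when it first equals the sorted word $0^a1^b$. A convenient preliminary reduction is available: writing $w^{\mathrm{rc}}$ for the word obtained from $w$ by reversing it and then complementing every bit, one checks that the descents (the $10$ factors) of $w$ and of $w^{\mathrm{rc}}$ are in position-reversing bijection, and hence that $F(w^{\mathrm{rc}})=F(w)^{\mathrm{rc}}$. Since $(1^b0^a)^{\mathrm{rc}}=1^a0^b$ and $(0^a1^b)^{\mathrm{rc}}=0^b1^a$, the words $1^b0^a$ and $1^a0^b$ require the same number of flips to sort, so I may assume $a\le b$.

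The heart of the argument is an inductive description of the orbit, which passes through configurations built from a block of $1$'s, an alternating band $(01)^k$, and a block of $0$'s. First I would prove, by induction on $t$, that for $0\le t\le a$ one has
\[
	F^t(1^b0^a)=1^{b-t}(01)^t0^{a-t}.
\]
The inductive step is a direct computation: the word $1^{b-t}(01)^t0^{a-t}$ has exactly $t+1$ descents (one where the leading block of $1$'s meets the band, $t-1$ inside the band, and one where the band meets the trailing block of $0$'s), and reversing all of them simultaneously shaves one symbol off each outer block while lengthening the band, yielding $1^{b-t-1}(01)^{t+1}0^{a-t-1}$.

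Next I would continue the induction past $t=a$, once the trailing block of $0$'s is exhausted. The same kind of descent-and-flip computation shows that for $a\le t\le b$,
\[
	F^t(1^b0^a)=1^{b-t}(01)^a1^{t-a},
\]
the band now sliding rightward through the leading $1$'s, and then that for $b\le t\le a+b$,
\[
	F^t(1^b0^a)=0^{t-b}(01)^{a-(t-b)}1^{t-a},
\]
the band finally being consumed into the growing outer blocks. Reading off this last formula, $F^{a+b-1}(1^b0^a)=0^{a-1}(01)1^{b-1}=0^a1^b$, which gives the upper bound, while for every $t<a+b-1$ the relevant formula still exhibits an alternating band $(01)^k$ with $k\ge 2$ (or otherwise a $10$ factor), so $F^t(1^b0^a)\ne 0^a1^b$; this gives the matching lower bound.

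I expect the main obstacle to be the bookkeeping at the two phase transitions, where a previously protective outer block is exhausted and the band begins to interact with the opposite pure block. At exactly these moments the set of descents changes character, and I must verify the descent structure carefully so that the inductive step still produces the claimed next configuration, and so that the first descent-free word is correctly located at step $a+b-1$ rather than $a+b$.
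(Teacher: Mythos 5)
Your proof is correct, but it takes a genuinely more explicit route than the paper's. You compute the entire orbit of $1^b0^a$ under $F$ in closed form, in three phases---an alternating band $(01)^k$ growing out of the single descent, then sliding through the leading block of ones, then being absorbed into the outer blocks---after first invoking the reverse-complement symmetry $F(w^{\mathrm{rc}})=F(w)^{\mathrm{rc}}$ to reduce to $a\le b$. Your inductive steps do check out: for instance, in the first phase one has $1^{b-t}(01)^t0^{a-t}=1^{b-t-1}(10)^{t+1}0^{a-t-1}$, so flipping the $t+1$ descents gives exactly the claimed next word, and the formulas agree at the phase boundaries $t=a$ and $t=b$; the lower bound also follows correctly since each formula with $t<a+b-1$ exhibits a $10$ factor. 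The paper instead tracks a single entry: the rightmost zero of $1^b0^a$ is not moved for the first $a-1$ flips, thereafter moves exactly one position left per flip, and must travel $b$ positions; once it reaches its final location, all zeros to its left must already be in place. That argument is far shorter and needs no case analysis or symmetry reduction, but it asserts the motion pattern of the rightmost zero without detailed justification---making that assertion fully rigorous essentially amounts to the orbit description you derived. So your proposal can be read as a complete, verified fleshing-out of the dynamics that the paper's proof summarizes in one sentence: yours buys airtight bookkeeping at the cost of length, while the paper's buys brevity at the cost of leaving the key dynamical claim to the reader.
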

\begin{proof}
The rightmost zero of $1^b0^a$ is not moved for the first $a-1$ iterations of the flip operator. Thereafter, it is moved one position left by every iteration. As this entry needs to move $b$ positions to the left, it arrives to its final location after precisely $a+b-1$ iterations of the flip operator. Once the rightmost zero arrives in its final location, then all the other zeros (which occur before it by definition) must be in their final locations, and so the word has been sorted.
\end{proof}

In particular, Proposition~\ref{prop-flips-number} shows that every binary word of length $n$ can be sorted by $n-1$ applications of the flip operator. Combining this with observations we have already made, it follows that for all permutations $\pi$ of length $n$ and integers $0\le k\le n$,
\[
	P^{n-1}(\pi)\vert_k
	\in
	T^{n-1}(\pi\vert_k)
	\trianglelefteq
	F^{n-1}(\pi\vert_k)
	=
	0^k1^{n-k}.
\]
This implies that, for every choice of $k$, all entries of $P^{n-1}(\pi)$ of value less than $k$ occur before all entries of $P^{n-1}(\pi)$ of value at least $k$. The only way that can occur is if $P^{n-1}(\pi)$ is in fact the identity, proving Theorem~\ref{thm-n-1-pop-stacks-suffice}.

\section{Concluding Remarks}

We conclude with a discussion of the variations on sorting with stacks in series, as there has been some confusion about this in the literature. There are two distinct notions of what it means for the permutation $\pi$ to be ``sorted by stacks in series''. What we defined to be West-$t$-stack-sortability in the introduction means that $S^t(\pi)$ is the identity. The other notion, which we believe deserves the name \emph{$t$-stack-sortability}, states merely that $t$ stacks connected in series can sort $\pi$, if no restrictions are placed on the algorithm to be used (that is, if the stacks are allowed to function nondeterministically).

Enumerations and characterizations are much easier to obtain for West-$t$-stack-sortability than for the more powerful $t$-stack-sortability. For an example of the stark difference between the two definitions, note that it is trivial to determine if $\pi$ is West-$2$-stack-sortable%
\footnote{Characterizing the West-$t$-stack-sortable permutations in terms of pattern avoidance is more difficult; this was done for $t=2$ by West~\cite{west:sorting-twice-t:} and for $t=3$ by \'Ulfarsson~\cite{ulfarsson:describing-west:}.}%
---one simply computes $S^2(\pi)$---but it was only recently proved by Pierrot and Rossin~\cite{pierrot:2-stack-sorting:} that it can be determined in polynomial time if $\pi$ is $2$-stack-sortable.

\begin{figure}
	\begin{footnotesize}
	\begin{center}
	\begin{tikzpicture}[scale=0.2, baseline=(current bounding box.center)]
		\draw [very thick, line cap=round, darkgray]
			(12,5)--(7,5)--(7,0)--(5,0)--(5,5);
		\draw [very thick, line cap=round, darkgray]
			(-2.5,5)--(2.5,5);
		\draw [very thick, line cap=round, darkgray]
			(-12,5)--(-7,5)--(-7,0)--(-5,0)--(-5,5);
		\node at (-9.5,5) [below] {output};
		\node at (9.5,5) [below] {input};
		\node at (0,5) [below] {queue};
		\draw [rounded corners=5, ->] (9,6)--({7-2/3},6)--({7-2/3},3);
		\draw [rounded corners=5, ->] ({5+2/3},3)--({5+2/3},6)--(2.5,6);
		\draw [rounded corners=5, ->] (-2.5,6)--({-5-2/3},6)--({-5-2/3},3);
		\draw [rounded corners=5, ->] ({-7+2/3},3)--({-7+2/3},6)--(-9,6);
	\end{tikzpicture}
\quad\quad\quad\quad
	\begin{tikzpicture}[scale=0.2, baseline=(current bounding box.center)]
		\draw [very thick, line cap=round, darkgray]
			(9,5)--(4,5)--(4,0)--(2,0)--(2,5)--(0,5)
			--(-2,5)--(-2,0)--(-4,0)--(-4,5)--(-9,5);
		\node at (-6.5,5) [below] {output};
		\node at (6.5,5) [below] {input};
		\draw [rounded corners=5, ->] (6,6)--({4-2/3},6)--({4-2/3},3);
		\draw [rounded corners=5, ->] ({2+2/3},3)--({2+2/3},6)--({-2-2/3},6)--({-2-2/3},3);
		\draw [rounded corners=5, ->] ({-4+2/3},3)--({-4+2/3},6)--(-6,6);
	\end{tikzpicture}
	\end{center}
	\end{footnotesize}
\caption{Two ways to connect stacks and pop-stacks in series: with and without a queue in between.}
\label{fig-2-stacks-series-queue}
\end{figure}
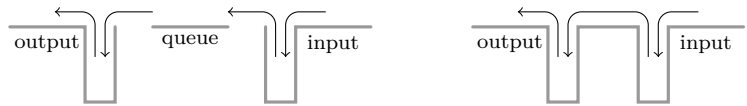

An alternative viewpoint on $t$-stack-sortability arises if we define the $\mathscr{S}(\pi)$ to be the \emph{set} of all permutations that $\pi$ can be transformed into by a stack operating nondeterministically, i.e., not necessarily following the canonical stack sorting algorithm. For example, while a single stack cannot sort $231$, it can transform it into all other permutations of length $3$, so we have
\[
	\mathscr{S}(231)=\{132, 213, 231, 312, 321\}.
\]
Then $\mathscr{S}^t(\pi)$ is the set of all permutations that $\pi$ can be transformed into after $t$ passes through a nondeterministic stack, and $\pi$ can be sorted in this way if the identity lies in $\mathscr{S}^t(\pi)$. Technically, this corresponds to a machine consisting of $t$ stacks in series with queues in between, as shown on the left of Figure~\ref{fig-2-stacks-series-queue}, but it is not hard to see that this machine can sort the same permutations as the machine consisting of $t$ stacks connected in series without queues in between, as shown on the right of Figure~\ref{fig-2-stacks-series-queue}. Therefore this definition of $t$-stack-sortability agrees with our previous definition.

In the case of pop-stack sorting, the situation is more complicated. We suggest that if $P^t(\pi)$ is the identity, then $\pi$ is \emph{West-$t$-pop-stack-sortable} (although West has never studied pop-stacks). Thus Theorem~\ref{thm-n-1-pop-stacks-suffice} states that every permutation of length $n$ is West-$(n-1)$-pop-stack-sortable. There has been some recent research on this notion of sorting with pop-stacks: Pudwell and Smith~\cite{pudwell:two-stack-sorti:} characterize and enumerate the West-$2$-pop-stack-sortable permutations, while Claesson and Gu\dh{}mundsson~\cite{claesson:enumerating-per:} prove that the set of West-$t$-pop-stack-sortable permutations has a rational generating function for all $t$ and give an algorithm for its computation.

There are two different interpretations of connecting pop-stacks (acting nondeterministally) in series, because it matters if there are queues in between. In other words, sorting with two pop-stacks connected without a queue in between (as on the right of Figure~\ref{fig-2-stacks-series-queue}) is strictly less powerful than sorting with one pop-stack and then passing that output into another pop-stack (that is, sorting with two pop-stacks connected with a queue in between, as on the left of Figure~\ref{fig-2-stacks-series-queue}).

Avis and Newborn~\cite{avis:on-pop-stacks-i:} themselves considered pop-stacks in series \emph{without} queues in between them, and thus one of their results states that only certain permutations (which are now called the separable permutations) can be sorted by \emph{any} number of pop-stacks in series. Indeed, in this interpretation, when the first pop-stack performs a pop, all of its contents move immediately into the second pop-stack. Thus no matter how many pop-stacks one has access to, if they are connected without queues between them, once two entries lie in one pop-stack together they will cohabitate all later pop-stacks. We refer to Atkinson and Stitt~\cite[Section 6.3]{atkinson:restricted-perm:wreath} for a redevelopment of the Avis--Newborn results with the more recent machinery of the substitution decomposition.

Connecting pop-stacks in series \emph{with} queues between them has received little attention, although Atkinson and Stitt~\cite[Section 6.4]{atkinson:restricted-perm:wreath} characterize the permutations that can be sorted by $2$ pop-stacks ``in genuine series'' as they call it, and find their (rational) generating function. This version of the problem can be restated by introducing a nondeterministic pop-stack sorting operator as we did above for stacks. We define $\mathscr{P}(\pi)$ to be the set of all possible outputs if $\pi$ is given as the input to a pop-stack operating nondeterministically. For example, one can check that
\[
	\mathscr{P}(231)=\{132, 213, 231, 321\}
\]
because in addition to not being able to sort $231$, a pop-stack also cannot transform it into $312$. With this definition, $t$ pop-stacks connected in series with queues between them can sort $\pi$ if and only if the identity lies in $\mathscr{P}^t(\pi)$.

On a different but related note, in their investigation of the pop-stack operator, Asinowski, Banderier, Billey, Hackl, and Linusson~\cite{asinowski:pop-stack-sorti:} define the permutation $\pi$ to be \emph{pop-stacked} if $\pi=P(\sigma)$ for some $\sigma$. Further research in this direction has performed by Claesson, Gu\dh{}mundsson, and Pantone~\cite{claesson:counting-pop-st:} and Asinowski, Banderier, and Hackl~\cite{asinowski:flip-sort-and-c:}.

\bigskip

\minisec{Acknowledgements}
We thank Mikl\'os B\'ona for interesting us in this problem. 


\def\cprime{$'$}

\end{document}